\theoremstyle{plain}
\newtheorem{theorem}{Theorem}
\newtheorem{lemma}{Lemma}
\theoremstyle{definition}
\newtheorem*{remark}{Remark}
\def\d{\underbar{\em d}}
\title{A sharp refinement of a result of  {Z}verovich--{Z}verovich} 
\author{Grant Cairns}
\author{Stacey Mendan}
\author{Yuri Nikolayevsky}
\address{Dept of Mathematics and Statistics, La Trobe University, Melbourne, Australia 3086}
\email{G.Cairns@latrobe.edu.au}
\email{spmendan@students.latrobe.edu.au}
\email{Y.Nikolayevsky@latrobe.edu.au}
\keywords{graph, vertex degree, graphic sequence}
\subjclass{05C07}
\begin{document}

\maketitle

\begin{abstract}
For a finite sequence of positive integers to be the degree sequence of a finite graph, Zverovich and Zverovich gave a sufficient condition  involving  only the length of the sequence, its maximal element and its minimal element. In this paper we give a sharp refinement of   {Z}verovich--{Z}verovich's result.
\end{abstract}

\section{Introduction}
A finite sequence $\d=(d_1,\dots,d_n)$ of positive  integers is \emph{graphic} if it occurs as the sequence of vertex degrees of a simple graph. The classic theorem of  Erd\H{o}s and Gallai Theorem gives a necessary and sufficient condition for a sequence to be graphic (see \cite{TV,TVW,YL}). A theorem of Zverovich and Zverovich gives a sufficient condition involving  only the length of the sequence, its maximal element and its minimal element. Their result can be stated in the following equivalent form.

\begin{theorem}[{\cite[Theorem 6]{ZZ}}]\label{T:zz}
Suppose that $\emph{\d}$ is a decreasing sequence  of positive integers with even sum. Let $a$ (resp.~$b$) denote the maximal (resp.~minimal) element of $\emph{\d}$. Then  $\emph{\d}$  is graphic if 
\begin{equation}\label{E:zz}
nb\geq  \frac{(a  +  b +1)^2}{4  }.
\end{equation}
\end{theorem}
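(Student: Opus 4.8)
The plan is to verify the Erdős–Gallai inequalities directly, using the hypothesis \eqref{E:zz} to control the key terms. Recall that a decreasing sequence $\d=(d_1,\dots,d_n)$ with even sum is graphic if and only if for every $k$ with $1\le k\le n$,
\[
\sum_{i=1}^{k} d_i \;\le\; k(k-1) + \sum_{i=k+1}^{n} \min\{k,d_i\}.
\]
So I would fix $k$ and bound the left side from above by $ka$ (since every term is at most $a$), and bound the right side from below. For the right side, the terms with $i>k$ each contribute $\min\{k,d_i\}\ge \min\{k,b\}$, so the sum there is at least $(n-k)\min\{k,b\}$. Thus it suffices to prove, for all $1\le k\le n$,
\[
ka \;\le\; k(k-1) + (n-k)\min\{k,b\}.
\]

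Next I would split into the two cases $k\le b$ and $k\ge b$. When $k\le b$, the inequality to prove becomes $ka\le k(k-1)+(n-k)k$, i.e.\ (dividing by $k$) $a\le k-1+n-k = n-1$; this holds because $a\le n-1$ is forced by \eqref{E:zz} together with $b\le n-1$ — indeed if $a\ge n$ then, using $b\le n-1$, one checks $nb < (a+b+1)^2/4$, contradicting the hypothesis (this is the one small computation I would actually carry out, comparing $4nb$ with $(a+b+1)^2$ when $a=n, b\le n-1$, or more cleanly observing $\d$ graphic already requires $a\le n-1$ so this case is vacuous unless the sequence could be graphic). When $k\ge b$, the inequality becomes $ka\le k(k-1)+(n-k)b$, which rearranges to $k^2-(a+b+1)k+nb\ge 0$. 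This is a quadratic in $k$ with positive leading coefficient, so it is nonnegative for all real $k$ precisely when its discriminant is nonpositive, i.e.\ $(a+b+1)^2-4nb\le 0$ — which is exactly the hypothesis \eqref{E:zz}.

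Finally I would assemble the two cases: for $1\le k\le b$ use $a\le n-1$; for $b\le k\le n$ use the discriminant argument. Since these two ranges cover all of $\{1,\dots,n\}$ (noting $b\ge 1$), all Erdős–Gallai inequalities hold, and with the even-sum hypothesis $\d$ is graphic. The main obstacle, such as it is, is handling the boundary case $k=b$ consistently between the two arguments (both give $a\le n-1$ versus the quadratic evaluated at $k=b$, which should agree up to the already-established bound), and ensuring the reduction $\min\{k,d_i\}\ge\min\{k,b\}$ is applied correctly when some $d_i$ with $i>k$ might be smaller than $k$ but no smaller than $b$; since $b$ is the global minimum this is immediate, so the proof is essentially the quadratic computation together with the observation $a\le n-1$.
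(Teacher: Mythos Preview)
Your argument is correct. The reduction to the single inequality $ka \le k(k-1)+(n-k)\min\{k,b\}$ is valid, and the discriminant calculation handles the case $k\ge b$ exactly. One small point: the parenthetical ``or more cleanly observing $\d$ graphic already requires $a\le n-1$'' is circular, since graphicness is what you are proving. Stick with the direct computation, which is easy: if $a\ge n$ then $(a+b+1)^2\ge (n+b+1)^2=(n-b)^2+4nb+2(n+b)+1>4nb$, contradicting \eqref{E:zz}; so $a\le n-1$ follows from the hypothesis alone, with no need for the side assumption $b\le n-1$.

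As for comparison: the paper does not actually prove Theorem~\ref{T:zz} --- it is quoted from \cite{ZZ} and used as a black box for case~(I) in Section~\ref{S:suff}. The paper's own machinery for the remaining cases (the criterion \cite[Theorem~3]{ZZ} together with Lemma~\ref{L:ineq}) would, if applied to case~(I), also yield a proof, and it lands on essentially the same quadratic $x^2-(a+b+1)x+nb$ that you found. The difference is packaging: the paper works with the reformulated quantity $r_k$ and bounds it via the maximal strong index $k_m$, obtaining $r_k\le k(n-1)+k_m(a+b+1)-k_m^2-bn$, and then maximises over $k_m$; you work with Erd\H{o}s--Gallai directly and maximise over the index $k$ itself. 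Your route is the more elementary one and is entirely adequate for Theorem~\ref{T:zz}. The paper's route buys something extra: Lemma~\ref{L:ineq} also records exactly when equality can occur (namely $k=k_m$ and $\d=(a^{k_m},b^{n-k_m})$), and this equality information is what drives the sharper cases~(II)--(IV) of Theorem~\ref{T:zz2}. For the unrefined statement you were asked to prove, that extra precision is not needed, and your direct Erd\H{o}s--Gallai argument is cleaner.
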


It is known that this result is not sharp (see  \cite{BHJW}). A sharp bound in the case $b =1$ was given in \cite{CSZZ}. The main aim of this paper is to prove the following result, which is sharp for all $a ,b $ and $n$. 

\begin{theorem}\label{T:zz2}
Suppose that $\emph{\d}$ is a decreasing sequence  of positive integers with even sum. Let $a$ (resp.~$b$) denote the maximal (resp.~minimal) element of $\emph{\d}$. Then  $\emph{\d}$  is graphic if 
\begin{equation}\label{E:zz2}
nb\geq  \begin{cases}
\left\lfloor\dfrac{(a  +  b +1)^2}{4    }\right\rfloor-1&: \ \text{if } b\ \text{is odd, or }a +b \equiv 1\pmod 4,\\ \\
\left\lfloor\dfrac{(a  +  b +1)^2}{4    }\right\rfloor&: \ \text{otherwise},
\end{cases}
\end{equation}
where $\lfloor.\rfloor$ denotes the integer part. Moreover, for any triple $(a,b,n)$ of positive integers with $b< a<n$ that fails \eqref{E:zz2}, there is a nongraphic sequence of length $n$ having even sum with maximal element $a$ and  minimal element $b$.
\end{theorem}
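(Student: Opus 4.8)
The plan is to establish the ``if'' direction and the sharpness clause separately, in both cases working from the Erd\H{o}s--Gallai criterion in the form: a decreasing sequence $\d=(d_1,\dots,d_n)$ of positive integers with even sum is graphic if and only if $f(k):=k(k-1)+\sum_{i=k+1}^{n}\min(d_i,k)-\sum_{i=1}^{k}d_i$ is nonnegative for $k=1,\dots,n$. For the ``if'' direction I would bound $f$ from below using only $b\le d_i\le a$ and $a\le n-1$. When $k\le b$, every $d_i$ with $i>k$ satisfies $d_i\ge b\ge k$, so $f(k)=k(n-1)-\sum_{i\le k}d_i\ge k(n-1-a)\ge0$. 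When $b<k\le n$, we have $\min(d_i,k)\ge b$ for $i>k$, hence $f(k)\ge k(k-1)+(n-k)b-ka=k^2-(a+b+1)k+nb$, and the minimum of this quadratic over all integers $k$ equals $nb-\lfloor(a+b+1)^2/4\rfloor$; so $f(k)\ge nb-\lfloor(a+b+1)^2/4\rfloor$ on this range. Combining, $\d$ is graphic whenever $nb\ge\lfloor(a+b+1)^2/4\rfloor$, which already settles the ``otherwise'' case of \eqref{E:zz2}.

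It remains to handle $nb=\lfloor(a+b+1)^2/4\rfloor-1$, which (since $\d$ satisfies \eqref{E:zz2}) forces $b$ to be odd or $a+b\equiv1\pmod4$. Here $f(k)\ge-1$ for $b<k\le n$ and $f(k)\ge0$ for $k\le b$, and I would show $f(k_0)\ne-1$ for every $k_0$. If $f(k_0)=-1$, then all the estimates above are equalities, which forces $d_1=\dots=d_{k_0}=a$, $d_{k_0+1}=\dots=d_n=b$, and $k_0$ to be an integer minimizer of $k^2-(a+b+1)k+nb$ (so $k_0=(a+b+1)/2$ if $a+b$ is odd, and $k_0\in\{(a+b)/2,(a+b+2)/2\}$ if $a+b$ is even). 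Substituting $nb=\lfloor(a+b+1)^2/4\rfloor-1$ into $k_0a+(n-k_0)b=k_0(a-b)+nb$ and splitting on the parity of $a+b$ then shows this sum is odd in every such case, contradicting that $\d$ has even sum. Hence $f\ge0$ throughout and $\d$ is graphic.

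For the sharpness clause, fix $(a,b,n)$ with $b<a<n$ failing \eqref{E:zz2}. I would first observe that both $(b+1,b,n)$ and $(n-1,n-2,n)$ satisfy \eqref{E:zz2}, so one may assume $a\ge b+2$ and $a+b\le2n-4$. The candidate nongraphic sequence $\d$ consists of $p$ copies of $a$, then $\varepsilon\in\{0,1\}$ copies of $b+1$, then $n-p-\varepsilon$ copies of $b$, subject to $p>b$, $p\ge1$, $n-p-\varepsilon\ge1$. For such $\d$ one computes $\sum_i d_i=p(a-b)+nb+\varepsilon$ and $f(p)=p^2-(a+b+1)p+nb+\varepsilon$, so $\d$ is a nongraphic sequence of even sum with the prescribed $(a,b,n)$ as soon as $p(a-b)+nb+\varepsilon$ is even and $nb\le p(a+b+1-p)-\varepsilon-1$. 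I would then take $p$ to be an integer maximizer of $p(a+b+1-p)$ --- namely $(a+b+1)/2$ when $a+b$ is odd and $(a+b)/2$ when $a+b$ is even --- using $\varepsilon=1$ only in the case ``$a+b$ even and $nb$ odd'', where varying $p$ cannot fix the parity, and replacing $p$ by $p-1$ when $a+b$ is odd and a parity fix is needed. A routine case analysis on the parities of $a,b,n$ then matches, in each case, the inequality $nb\le p(a+b+1-p)-\varepsilon-1$ against the negation of \eqref{E:zz2}, the bounds $a\ge b+2$ and $a+b\le2n-4$ guaranteeing that each chosen $p$ is admissible.

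I expect the ``if'' direction to be quite short: once the crude quadratic bound for $f$ is in hand, integrality of $k$ together with the parity of the forced extremal configuration already yields the sharp constant, with no further Erd\H{o}s--Gallai analysis needed. The main obstacle is the parity bookkeeping in the sharpness clause, where the constructed sequence must simultaneously be forced to have even sum and to violate the Erd\H{o}s--Gallai inequality at index $p$ by exactly the right amount. The computations ultimately close because $\lfloor(a+b+1)^2/4\rfloor$ equals the perfect square $\big((a+b+1)/2\big)^2$ when $a+b$ is odd and the product $\tfrac{a+b}{2}\big(\tfrac{a+b}{2}+1\big)$ of two consecutive integers when $a+b$ is even, which in each case pins down the parity of $nb$ relative to that of $p$ and $\sum_i d_i$, matching the $\bmod\,4$ dichotomy appearing in \eqref{E:zz2}.
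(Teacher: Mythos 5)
Your proposal is correct, but it takes a genuinely different route from the paper's. For sufficiency, the paper reformulates \eqref{E:zz2} into four congruence cases and then works with the Zverovich--Zverovich strong-index criterion ($r_k\le k(n-1)$) via Lemma \ref{L:ineq}, whereas you argue directly from Erd\H{o}s--Gallai: the crude bound $f(k)\ge k^2-(a+b+1)k+nb$ together with integrality of $k$ yields the constant $\lfloor (a+b+1)^2/4\rfloor$ immediately, and your equality analysis forces the extremal shape $(a^{k_0},b^{n-k_0})$ with $k_0$ at the vertex, which the parity of its sum excludes when $nb=\lfloor (a+b+1)^2/4\rfloor-1$; I checked both parity subcases ($a+b$ odd gives $k_0(a-b)+k_0^2$ even, $a+b$ even gives $nb=\tfrac{a+b}{2}\bigl(\tfrac{a+b}{2}+1\bigr)-1$ odd) and the contradiction with even sum is genuine, so this replaces the paper's Section \ref{S:cond} case split and its use of \cite[Theorem~3]{ZZ} with a shorter, self-contained argument (it even shows the borderline value cannot occur in the ``otherwise'' branch, which is consistent with sharpness). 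For the sharpness clause, the paper first proves Theorem \ref{T:ab} (an if-and-only-if for two-element sequences) and then uses $(a^s,b^{n-s})$ in cases (I)--(III) and a separate sequence with one entry $b+1$ in case (IV); you unify all of these into the single family $(a^p,(b+1)^{\varepsilon},b^{n-p-\varepsilon})$ and need only the necessity direction of Erd\H{o}s--Gallai (failure at $k=p$), with the preliminary reductions $a\ge b+2$ and $a+b\le 2n-4$ (both correctly justified by checking that $(b+1,b,n)$ and $(n-1,n-2,n)$ satisfy \eqref{E:zz2}) guaranteeing admissibility of the chosen $p$ and $\varepsilon$. I verified the case analysis you defer as ``routine'': the only delicate point, $a+b\equiv 3\pmod 4$ with $b$ even and $nb=\lfloor (a+b+1)^2/4\rfloor-1$, cannot force the shift to $p-1$ because there $p+nb$ is automatically even, and for $a+b$ even the rule $\varepsilon\equiv nb\pmod 2$ matches the branch of \eqref{E:zz2} exactly; so the construction closes. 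One minor shared point: both your argument and the paper's use $a\le n-1$ without comment; it does follow from \eqref{E:zz2} (plus even sum in the degenerate case $a=b=1$), but a sentence noting this would tighten either write-up.
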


The paper is organised as follows. In Section \ref{S:cond} we examine condition \eqref{E:zz2} and rewrite it in a more convenient form. We then prove that condition \eqref{E:zz2} is sufficient in Section \ref{S:suff}. The sharpness is shown in Section \ref{S:nec}. To establish this we first prove the following result in Section \ref{S:2e}, which may be of independent interest.
Here, and in sequences throughout this paper, the superscripts indicate the number of repetitions of the entry. 

\begin{theorem}\label{T:ab}
Consider natural numbers $b<a<n$ and suppose that $as+b(n-s)$ is even. Then for $0<s<n$, the sequence $(a^s,b^{n-s})$ is graphic if and only if $s^2-(1+a+b)s+nb\geq 0$.
 \end{theorem}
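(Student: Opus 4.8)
The plan is to apply the Erd\H{o}s--Gallai theorem, which states that a non-increasing sequence $d_1\ge\cdots\ge d_n$ of positive integers with even sum is graphic if and only if $\sum_{i=1}^k d_i\le k(k-1)+\sum_{i=k+1}^n\min(d_i,k)$ for every $k$ with $1\le k\le n$, together with the standard reduction (see \cite{TV,TVW,YL}) that it is enough to verify this inequality at $k=1$ and at every index $k<n$ with $d_k>d_{k+1}$. For the sequence $(a^s,b^{n-s})$ with $b<a$ and $0<s<n$ these indices are $k=1$, $k=s$ and $k=n$. The inequality at $k=1$ is just $a\le n-1$, and the one at $k=n$ is $sa+(n-s)b\le n(n-1)$; both hold because $a\le n-1$. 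Hence $(a^s,b^{n-s})$ is graphic if and only if the inequality at $k=s$ holds, that is,
\[
  sa\ \le\ s(s-1)+(n-s)\min(b,s).
\]

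I would then evaluate this according to the sign of $s-b$. If $s\ge b$ then $\min(b,s)=b$, and the displayed inequality rearranges directly into $s^2-(1+a+b)s+nb\ge0$, which is the condition in the statement. If $s<b$ then $\min(b,s)=s$, and the inequality reads $sa\le s(s-1)+(n-s)s=s(n-1)$, i.e.\ $a\le n-1$, which always holds; so for $s<b$ the sequence is automatically graphic, and it remains only to check that the quadratic condition holds in that range as well. For this, set $g(x):=x^2-(1+a+b)x+nb$; its vertex is at $x=\tfrac12(1+a+b)\ge b+1$ (using $a\ge b+1$), so $g$ is decreasing on $[0,b]$, and since $g(b)=b(n-1-a)\ge0$ we get $g(s)\ge g(b)\ge0$ for every integer $s$ with $0<s\le b$. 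Thus in every case $(a^s,b^{n-s})$ is graphic $\iff$ the $k=s$ inequality holds $\iff$ $s^2-(1+a+b)s+nb\ge0$ (for $s<b$ the last two statements hold unconditionally), which is the theorem.

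The only step above that is not a one-line computation is the reduction to the three indices $1,s,n$; one can either cite it from the literature on Erd\H{o}s--Gallai-type conditions, or, to keep the proof self-contained, verify the Erd\H{o}s--Gallai inequality directly for every $k\in\{1,\dots,n\}$. In the latter approach one writes the slack $f(k)=k(k-1)+\sum_{i>k}\min(d_i,k)-\sum_{i\le k}d_i$ and checks that $f$ is affine or convex on each piece, with breakpoints among $\{b,a,s\}$, that $f(s)$ equals $g(s)$ when $s\ge b$ and equals $s(n-1-a)\ge0$ when $s<b$, and that on every other piece the minimum is bounded below by $0$ using only $a\le n-1$, $0<s<n$, and the inequality $f(s)\ge0$ on the pieces adjacent to $k=s$. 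I expect this bookkeeping to be the only real work in the proof. Finally, it is worth recording that $g$ assumes a negative value precisely when $nb<\tfrac14(1+a+b)^2$, i.e.\ exactly when the Zverovich--Zverovich bound \eqref{E:zz} fails; this is the link that makes Theorem~\ref{T:ab} the engine behind the sharpness part of Theorem~\ref{T:zz2}.
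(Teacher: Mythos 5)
Your proof is correct, and it reaches the conclusion by a genuinely shorter route than the paper. Both arguments rest on Erd\H{o}s--Gallai applied to $(a^s,b^{n-s})$ and both identify $k=s$ as the only binding index, but the paper does this by hand: it splits the EG inequalities into five ranges of $k$, reduces them to the two quadratic conditions \eqref{E:delt0} and \eqref{E:delt}, and then needs Lemma~\ref{L:delta} -- a short but nontrivial argument using $b<a<n$ -- to show that the $k=s$ inequality forces \eqref{E:delt} for all $k>s$, $k>b$. You instead invoke the Tripathi--Vijay reduction (their paper is already \cite{TV} in the bibliography) that EG need only be checked at the last index of each block of equal values, which for a two-valued sequence means $k=s$ and $k=n$; after the trivial checks at $k=1$, $k=n$, and the observation that for $s\le b$ both the EG inequality at $k=s$ and the quadratic $g(s)=s^2-(1+a+b)s+nb\ge 0$ hold automatically (vertex of $g$ at $\tfrac12(1+a+b)\ge b+1$ and $g(b)=b(n-1-a)\ge 0$), the equivalence drops out. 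What your approach buys is the elimination of the paper's case analysis and of Lemma~\ref{L:delta}; what it costs is reliance on an external reduction theorem, so be careful to quote it accurately: the standard statement is that it suffices to check all run-end indices (equivalently all $k$ with $d_k>d_{k+1}$ under the convention $d_{n+1}=0$, which includes $k=n$), not ``$k=1$ and all $k<n$ with $d_k>d_{k+1}$'' as you wrote -- harmless here only because you verify $k=n$ anyway. Also note that your proposed ``self-contained'' alternative is only a sketch; carrying it out would essentially reproduce the paper's five-case bookkeeping and its quadratic lemma, so as written the citation to \cite{TV} is doing real work in your proof.
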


\begin{remark}
The assumption $b<a<n$ is not restrictive. All sequences with $a\geq n$ are obviously nongraphic. For $a=b$, it follows from Theorem \ref{T:zz2} that  $(a^n)$ is graphic if and only if $an$ is even and $a<n$.\end{remark}

Throughout the following,   $\d=(d_1,\dots,d_n)$ denotes a decreasing sequence  with maximal element $d_1=a$ and  minimal element $d_n=b$. 

\section{The hypothesis}\label{S:cond}

We claim that  the inequality \eqref{E:zz2} can be conveniently expressed according to the following four disjoint, exhaustive cases:
\begin{enumerate}[{\rm (a)}]
  \item[(I)] \label{it:a}
  If $a+b+1\equiv 2bn \pmod 4$, then
  $(a+b+1)^2 \le 4bn$.

  \item[(II)] \label{it:b}
  If $a+b+1\equiv 2bn+2 \pmod 4$, then $(a+b+1)^2 \le 4bn+4$.

  \item[(III)] \label{it:c}
If $a+b$ is even and $bn$ is even, then
 $ (a+b+1)^2 \le 4bn+1$.
 
  \item[(IV)] \label{it:d}
  If $n,a,b$ are all odd, then $ (1+a+b)^2 \le 4bn+5$.
\end{enumerate}

First note that  in cases (I) and (II), we have $\left\lfloor\frac{(a  +  b +1)^2}{4    }\right\rfloor= \frac{(a  +  b +1)^2}{4    }$, while in cases (III) and (IV), $\left\lfloor\frac{(a  +  b +1)^2}{4    }\right\rfloor= \frac{(a  +  b +1)^2}{4    }-\frac14$. 

Consider case (I). There are two subcases to consider here. First,  if $b$ is even, then $a+b\equiv -1\pmod4$, so  \eqref{E:zz2} reads
\[
nb\ge \left\lfloor\frac{(a  +  b +1)^2}{4    }\right\rfloor=\frac{(a  +  b +1)^2}{4    }\]
or equivalently $(a+b+1)^2 \le 4bn$, as required.
The other subcase of case (I) is where $b$ is odd. Here,  \eqref{E:zz2} reads
\[
nb\ge \left\lfloor\frac{(a  +  b +1)^2}{4    }\right\rfloor-1=\frac{(a  +  b +1)^2-4}{4    }\]
or equivalently $(a+b+1)^2 \le 4bn+4$. Note that both sides of this inequality are multiples of 4. We claim that equality is impossible here, and so the condition is equivalent to  $(a+b+1)^2 \le 4bn$. Indeed, if $(a+b+1)^2 = 4bn+4$, then as $a+b+1\equiv 2bn \pmod 4$, we would have $4b^2n^2\equiv 4bn+4\pmod 8$. Hence 
$b^2n^2\equiv bn+1\pmod 2$. But this is impossible, as $x^2\equiv x\pmod 2$ for all $x$.

Consider case (II). Here, if $b$ is even, $a+b\equiv 1\pmod4$. So, regardless of whether $b$ is even or odd,  \eqref{E:zz2} reads
\[
nb\ge \left\lfloor\frac{(a  +  b +1)^2}{4    }\right\rfloor-1=\frac{(a  +  b +1)^2-4}{4    }\]
or equivalently $(a+b+1)^2 \le 4bn+4$, as required.

Consider case (III).  There are two subcases to consider here. First,  if $b$ is even, then $a$ is necessarily even, so $a+b\not\equiv 1\pmod4$, and hence  \eqref{E:zz2} reads
\[
nb\ge \left\lfloor\frac{(a  +  b +1)^2}{4    }\right\rfloor=\frac{(a  +  b +1)^2}{4    }-\frac14\]
or equivalently $(a+b+1)^2 \le 4bn+1$, as required. 
The other subcase of case (III) is where $b$ is odd. Here,  \eqref{E:zz2} reads
\[
nb\ge \left\lfloor\frac{(a  +  b +1)^2}{4    }\right\rfloor-1=\frac{(a  +  b +1)^2}{4    }-\frac14-1\]
or equivalently $(a+b+1)^2 \le 4bn+5$. We claim that equality is not possible and thus, as $(a+b+1)^2\equiv 1\pmod 4$, the condition is equivalent to  $(a+b+1)^2 \le 4bn+1$. Indeed, since $bn$ is even, 
$4bn+5\equiv 5 \pmod 8$. But  $a+b$ is even, say $a+b=2k$, so we have
$(a+b+1)^2=4(k^2+k)+1\equiv 1\pmod 8$, as $k^2+k$ is even for all $k$.
So $(a+b+1)^2 = 4bn+5$ is impossible.

In case (IV), $b$ is odd, so  \eqref{E:zz2} reads
\[
nb\ge \left\lfloor\frac{(a  +  b +1)^2}{4    }\right\rfloor-1=\frac{(a  +  b +1)^2}{4    }-\frac14-1\]
or equivalently $(a+b+1)^2 \le 4bn+5$, as claimed.

\section{Proof of sufficiency}\label{S:suff}
Suppose that $\d$ has even sum and that inequality \eqref{E:zz2} holds. 
We consider the 4 cases (I) -- (IV) given in Section \ref{S:cond}.
 The sufficiency in case (I) follows immediately from Theorem \ref{T:zz}. For the other cases, we employ similar ideas to those of \cite{ZZ}.
Let us first  recall some terminology and results of \cite{ZZ}. A number $k$ is called a \emph{strong index}, if $d_k \ge k$. Note that the set of strong indices is nonempty as $d_1\ge 1$, but not all indices are strong as $d_n=b<n$. The maximal strong index is denoted $k_m$. For $j \ge 0, \; n_j:=\#\{i: d_i=j\}$. By \cite[Theorem~3]{ZZ}, a sequence $\underline{d}$ is graphic if and only if it has an even sum and $r_k \le k(n-1)$ for all strong indices $k$, where $r_k=\sum_{i=1}^k(d_i+in_{k-i})$.

Let $k$ be a strong index of $\underline{d}$. If $k \le b$, then $n_{k-i}=0$ for all $1 \le i \le k$, so $r_k=\sum_{i=1}^k d_i \le ka \le k(n-1)$. So we may suppose that $k > b$. 

\begin{lemma}\label{L:ineq}
 We have
\begin{equation}\label{eq:rkkm}
    r_k \le k(n-1)+k_m(a+b+1)-k_m^2-bn,
\end{equation}
with equality only possible when $k=k_m$ and $\emph{\d}$ has the form $\emph{\d}=(a^{k_m} b^{n-k_m})$.
\end{lemma}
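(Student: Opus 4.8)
The plan is to bound $r_k$ from above by two elementary estimates and then compare the result with the right-hand side of \eqref{eq:rkkm} by a one-line computation.

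First I would put $r_k$ in a convenient form. Reindexing the inner sum by $j=k-i$ and using $n_j=0$ for $j<b$ (since $d_n=b$), we get $r_k=\sum_{i=1}^{k}d_i+\sum_{j=b}^{k-1}(k-j)\,n_j$. Now $\sum_{i=1}^k d_i\le ka$ because $d_i\le d_1=a$, and in the second sum $k-j\le k-b$ throughout, so it is at most $(k-b)\sum_{j=b}^{k-1}n_j$. The quantity $\sum_{j=b}^{k-1}n_j$ is precisely $n-m$, where $m:=\#\{i:d_i\ge k\}$ counts the ``large'' entries. Hence $r_k\le ka+(k-b)(n-m)$.

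The one structural observation needed is $m\ge k_m$: since $k\le k_m$ (as $k_m$ is the largest strong index) and $d_{k_m}\ge k_m\ge k$, monotonicity gives $d_i\ge k$ for all $i\le k_m$, so at least $k_m$ entries are $\ge k$. Because $k>b$ gives $k-b>0$, replacing $m$ by $k_m$ only weakens the bound, so $r_k\le ka+(k-b)(n-k_m)$. A routine expansion then gives the identity
\[
ka+(k-b)(n-k_m)-\bigl(k(n-1)+k_m(a+b+1)-k_m^2-bn\bigr)=(k-k_m)(a+1-k_m),
\]
and the right side is $\le 0$ because $k\le k_m$ and $k_m\le d_{k_m}\le a$. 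This establishes \eqref{eq:rkkm}.

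For the equality clause I would trace back through the estimates. Equality in \eqref{eq:rkkm} requires $(k-k_m)(a+1-k_m)=0$, hence $k=k_m$ (as $a+1-k_m\ge1$), together with equality in each of $\sum_{i=1}^k d_i\le ka$, $\sum_{j=b}^{k-1}(k-j)n_j\le(k-b)\sum_{j=b}^{k-1}n_j$, and $n-m\le n-k_m$. These force, respectively, $d_1=\dots=d_{k_m}=a$; $n_j=0$ for $b<j<k_m$; and $m=k_m$, i.e.\ $d_{k_m+1}<k_m$. Taken together these say that every entry after position $k_m$ equals $b$, that is, $\d=(a^{k_m},b^{n-k_m})$. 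I do not anticipate a genuine obstacle here; the only point that requires care is using the sharp count $m\ge k_m$ rather than the weaker $m\ge k$, since it is $k_m$, not $k$, that must appear in the final estimate.
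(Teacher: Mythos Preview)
Your proof is correct and follows essentially the same route as the paper: bound $\sum_{i=1}^k d_i\le ka$, bound the second sum by $(k-b)(n-k_m)$ via the observation that all entries $d_1,\dots,d_{k_m}$ are at least $k$, and then use $k\le k_m\le a$ to pass to the stated bound. The only cosmetic difference is that you package the final comparison as the single algebraic identity $(k-k_m)(a+1-k_m)$, whereas the paper writes it out as a two-line chain of inequalities; the equality analysis is also the same in substance.
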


\begin{proof}
We have $\sum_{i=1}^k d_i \le ka$, which becomes an equality only if $d_1=\dots=d_k=a$. Moreover, $\sum_{i=1}^k in_{k-i}=\sum_{j=0}^{k-1} (k-j)n_j \le (k-b)\sum_{j=0}^{k-1} n_j$, with equality only possible when all the $n_0, n_1, \dots, n_{k-1}$, but $n_b$ are zeros; that is, when $\big(d_i \le k-1 \Rightarrow d_i = b\big)$. Furthermore, \[
\sum_{j=0}^{k-1} n_j=\#\{i\in\{1,2,\dots,n\}: d_i \le k-1\} \le \#\{i: n\ge i > k_m\},
\]
 since for $i \le k_m$ we have $d_i \ge d_{k_m} \ge k_m \ge k$. Thus $\sum_{j=0}^{k-1} n_j \le n-k_m$, which becomes an equality only when $d_{k_m+1} \le k-1$. Thus $\sum_{i=1}^k in_{k-i} \le (k-b)(n-k_m)$, with equality only possible when $d_{k_m+1} = \dots = d_n=b$; indeed, if $d_{k_m+1} \le k-1$ and $\big(d_i \le k-1 \Rightarrow d_i = b\big)$, then $d_{k_m+1}=b$ and so $d_{k_m+1} = \dots = d_n=b$.
Thus \begin{equation}\label{eq:rk}
    r_k \le ka+(k-b)(n-k_m),
\end{equation}
with  equality only possible when $\underline{d}=(a^k,d_{k+1}, \dots,d_{k_m}, b^{n-k_m})$. 
As $a \ge d_{k_m} \ge k_m$, we have $a+1-k_m \ge 1$. Thus,  using $k\leq k_m$, inequality \eqref{eq:rk} gives
\begin{align*}
r_k \le ka+(k-b)(n-k_m)&=k(n-1)+k(a+1-k_m)+bk_m-bn \\
&\le k(n-1)+k_m(a+1-k_m)+bk_m-bn\\
&= k(n-1)+k_m(a+b+1)-k_m^2-bn,
\end{align*}
as required, with equality only possible when $k=k_m$ and $\d=(a^{k_m}, b^{n-k_m})$.
\end{proof}

Now consider cases (II) -- (IV) separately. In case~(III), the maximal value of the right-hand side of \eqref{eq:rkkm}, regarded as a quadratic in $k_m$, is attained at $k_m=\frac12(a+b+1 \pm 1)$ and is equal to $k(n-1)+\frac14 ((a+b+1)^2-1)-bn$. Thus the inequality $(a+b+1)^2 \le 4bn+1$ implies $r_k \le k(n-1)$ and so the sequence $\underline{d}$ is graphic by \cite[Theorem~3]{ZZ}.

In case~(II), the (unique) maximal value of the right-hand side of \eqref{eq:rkkm} for an integer $k_m$ is attained at $k_m=\frac12(a+b+1)$ and is equal to $k(n-1)+\frac14 (a+b+1)^2-bn$, but the sum of the sequence $(a^{\frac12(a+b+1)} ,b^{n-\frac12(a+b+1)})$ is odd, so the inequality \eqref{eq:rkkm} becomes strict, hence $r_k \le k(n-1)+\frac14 (a+b+1)^2-bn-1$. Thus the inequality $(a+b+1)^2 \le 4bn+4$ implies $r_k \le k(n-1)$ and therefore the sequence $\underline{d}$ is graphic by \cite[Theorem~3]{ZZ}.

In case (IV), $\d$ is not of the form $(a^s,b^{n-s})$ since, as $a,b$ and $n$ are all odd, the sequences of the form $(a^s,b^{n-s})$ have odd sum, contrary to our hypothesis. So by Lemma \ref{L:ineq}, 
the inequality~\eqref{eq:rkkm} is not strict.
As the maximum of the right hand side of \eqref{eq:rkkm} is attained for $k_m=\frac12(a+b+1 \pm 1)$, we get $r_k \le k(n-1)+\frac14 ((a+b+1)^2-1)-bn-1$, which implies  $r_k \le k(n-1)$ whenever $(a+b+1)^2 \le 4bn+5$.

\section{Two-element sequences}\label{S:2e}

\begin{proof}[Proof of Theorem \ref{T:ab}]  We apply the Erd\H{o}s--Gallai Theorem, which says that  $\d$ is graphic if and only if its sum is even and for each integer $k$ with $1 \leq  k \leq  n$,
\begin{equation*}
\sum_{i=1}^k d_i\leq  k(k-1)+ \sum_{i=k+1}^n\min\{k,d_i\}.\tag{EG}
\end{equation*}
For the sequence $\d=(a^s,b^{n-s})$, we consider (EG)  in 5 cases:
\begin{enumerate}
\item[(i)] If $k>s$ and $k\leq b$, then (EG) reads
\[
as+b(k-s)\leq k(k-1)+(n-k)k=k(n-1).\]
We have $as+b(k-s)=bk+s(a-b)<bk+k(a-b)=ka\leq k(n-1)$, so (EG) holds in this case.

\item[(ii)] If $k\leq s$ and $k\leq b$, then (EG) reads
\[
ak\leq k(k-1)+(n-k)k=k(n-1),\]
which is true as $a\leq n-1$.

\item[(iii)] If $k\leq s$ and $a<k$, then (EG) reads
\[
ak\leq k(k-1)+(s-k)a+(n-s)b.
\]
As $a\leq k-1$, we have $ak\leq k(k-1)\leq k(k-1)+(s-k)a+(n-s)b$, so (EG) holds in this case.

\item[(iv)] If $k>s$ and $k>b$, then (EG) reads
$as+b(k-s)\leq k(k-1)+(n-k)b$; that is
\begin{equation}\label{E:delt}
k^2-k(1+2b)+nb+bs-as\geq 0.
\end{equation}

\item[(v)] If $k\leq s$ and $b<k\leq a$, then (EG) reads
\[
ak\leq k(k-1)+(s-k)k+(n-s)b=(s-1)k+(n-s)b.
\]
This condition holds if $a\leq s-1$. If $a\geq s$, (EG) is $(a-s+1)k\leq (n-s)b$ and the most restrictive case occurs when $k=s$. Here the condition is 
\begin{equation}\label{E:delt0}
s^2-(1+a+b)s+nb\geq 0.
\end{equation}
\end{enumerate}
From the above we see that $(a^s,b^{n-s})$ is graphic if and only if \eqref{E:delt0} holds and \eqref{E:delt} holds for all $k>s$, $k>b$. 

\begin{lemma}\label{L:delta}
$s^2-(1+a+b)s+nb\geq 0$ if and only if $k^2-k(1+2b)+nb+bs-as\geq 0$ for all $k\in\{s,s+1,\dots,n\}$. 
\end{lemma}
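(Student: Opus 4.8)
The plan is to treat both sides as statements about the quadratic $Q(k):=k^2-k(1+2b)+nb+bs-as$ in the integer variable $k$, and to observe that the left-hand inequality is exactly $Q(s)\ge 0$, since $Q(s)=s^2-s(1+2b)+nb+bs-as=s^2-(1+a+b)s+nb$. Thus one direction is trivial: if $Q(k)\ge 0$ for all $k\in\{s,\dots,n\}$, then in particular $Q(s)\ge 0$. The content is the converse: assuming $Q(s)\ge 0$, show $Q(k)\ge 0$ for every integer $k$ with $s\le k\le n$.

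For the converse I would argue by analysing where the parabola $y=Q(k)$ dips below zero. Its vertex is at $k=\tfrac12(1+2b)=b+\tfrac12$, so $Q$ is decreasing for $k\le b$ and increasing for $k\ge b+1$; in particular $Q$ is increasing on the integers $\ge b+1$. If $s\ge b+1$, then $Q$ is nondecreasing on $\{s,s+1,\dots\}$, so $Q(s)\ge 0$ immediately gives $Q(k)\ge Q(s)\ge 0$ for all $k\ge s$, and we are done. The remaining case is $s\le b$ (recall from the theorem's hypotheses that $b<a$, and the interesting situation in step~(v) was $a\ge s$, but here we only need $s\le b$). In that case $Q(s)\ge 0$ together with $Q$ decreasing up to $k=b$ does not by itself control the values between $s$ and $b$; instead I would use the other endpoint. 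Evaluate $Q(n)=n^2-n(1+2b)+nb+bs-as=n^2-n(1+b)+(b-a)s=n(n-1-b)-(a-b)s$. Using $b<a<n$, hence $a-b\le n-1-b$ and $s\le b<n$, one gets $Q(n)\ge n(n-1-b)-(a-b)\,b$; I would check that this is $\ge 0$ (equivalently $n(n-1-b)\ge b(a-b)$), which holds comfortably because $n-1-b\ge a-b$ and $n\ge a> b$. So $Q(s)\ge 0$ and $Q(n)\ge 0$, and since a quadratic with positive leading coefficient that is nonnegative at two points is nonnegative on the entire interval between them, we conclude $Q(k)\ge 0$ for all $k\in[s,n]$, in particular for all integers in that range.

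The main obstacle is handling the case $s\le b$ cleanly: here the minimum of $Q$ over the integer interval $\{s,\dots,n\}$ is attained at an interior point (near $k=b$), so one genuinely needs a two-sided bound rather than a monotonicity argument, and one must verify that the endpoint value $Q(n)$ is nonnegative using the standing hypotheses $b<a<n$. I expect the verification $n(n-1-b)\ge b(a-b)$ to be the one small computation requiring care; everything else is routine. An alternative, perhaps slicker, route is to factor $Q(k)=(k-s)(k-?)+\text{(remainder)}$ — more precisely to write $Q(k)-Q(s)=(k-s)(k+s-1-2b)$, so that for $k\ge s$ the sign of $Q(k)-Q(s)$ is the sign of $k+s-1-2b$; combined with the symmetric identity relating $Q(k)$ and $Q(n)$ via $Q(k)-Q(n)=(k-n)(k+n-1-2b)$, one reduces the whole lemma to the numerical facts $Q(s)\ge 0$ (hypothesis) and $Q(n)\ge 0$, and I would present whichever of these two phrasings is shortest.
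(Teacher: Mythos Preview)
Your plan contains a genuine error in the key step for the case $s \le b$. You assert that ``a quadratic with positive leading coefficient that is nonnegative at two points is nonnegative on the entire interval between them,'' but this is false: an upward-opening parabola dips \emph{between} its roots, not outside them (e.g.\ $k^2-4$ is nonnegative at $k=\pm 3$ yet negative at $k=0$). The statement you want holds for \emph{concave} functions, not convex ones. Your alternative route via the identities $Q(k)-Q(s)=(k-s)(k+s-1-2b)$ and $Q(k)-Q(n)=(k-n)(k+n-1-2b)$ has the same defect: for $s\le k\le n$ these yield $Q(k)\ge Q(s)$ only when $k\ge 1+2b-s$, and $Q(k)\ge Q(n)$ only when $k\le 1+2b-n$, leaving the range $(1+2b-n,\,1+2b-s)$ around the vertex uncovered.

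The repair is easy and is essentially what the paper does: in the case $s\le b$, control the \emph{minimum} $Q(b)$ directly rather than the endpoint $Q(n)$. One computes $Q(b)=b(n-1-b)-s(a-b)$; since $s\le b$ and $a-b\le n-1-b$ (from $a<n$), this gives
\[
Q(b)\ \ge\ b(n-1-b)-b(a-b)\ =\ b(n-1-a)\ \ge\ 0,
\]
and hence $Q(k)\ge Q(b)\ge 0$ for every integer $k$. The paper reaches the same conclusion by contradiction: assuming $Q(b)<0$ and $s\le b-1$, it deduces $bn<b(a+1)$ and hence $n\le a$, contrary to $a<n$. Note that the bound on $Q(b)$ does not even use the hypothesis $Q(s)\ge 0$; in the regime $s\le b$ the conclusion holds unconditionally, which is why your endpoint argument felt suspiciously light. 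Your treatment of the case $s\ge b+1$ by monotonicity is correct and matches the paper.
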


\begin{proof}
Fix $n,a,b,s$ and let  $\Delta_k=k^2-k(1+2b)+nb+sb-as$. So $\Delta_s=s^2-(1+a+b)s+nb$, and hence one direction in this lemma is trivial. For the other direction, note that $\Delta_k$ is quadratic in the integer $k$ and takes its minimum value at the integers $b$ and $b+1$. The minimum value of $\Delta_k$ is 
\[
\Delta_b=b^2-b(1+2b)+nb+bs-as=bn-b^2-b-s(a-b).
\]
Suppose that $\Delta_s\geq 0$ and that $\Delta_k< 0$ for some integer $k>s$. Then $s\leq b-1$ and $\Delta_b<0$, so $s(a-b)>bn-b^2-b$ and hence $(b-1)(a-b) \geq s(a-b)>bn-b^2-b$. Expanding gives
 so
$ab-a+2b >bn$. Hence, as $b< a$,
\[
bn<ab-a+2b< ab+b=b(a+1)
\]
and so $n<a+1$. But this is impossible as $a< n$, by assumption.
\end{proof}
This completes the proof of Theorem \ref{T:ab}.\end{proof}

\section{Proof of Necessity}\label{S:nec}

Assume that \eqref{E:zz2} fails 
for the triple $(a,b,n)$, where $b< a<n$. We will exhibit a nongraphic sequence $\d$ of length $n$ having even sum with maximal element $a$ and  minimal element $b$. We consider the same four cases (I) -- (IV) given in Section \ref{S:cond}.
So our assumption is respectively:
\begin{enumerate}[{\rm (a)}]
  \item[(I)] \label{it:a}
   $a+b+1\equiv 2bn \pmod 4$, and
  $(a+b+1)^2 > 4bn$.

  \item[(II)] \label{it:b}
   $a+b+1\equiv 2bn+2 \pmod 4$,  and
 $(a+b+1)^2 > 4bn+4$.

  \item[(III)] \label{it:c}
$a+b$ is even and $bn$ is even,  and
 $ (a+b+1)^2 > 4bn+1$.
 
  \item[(IV)] \label{it:d}
   $n,a,b$ are all odd,  and
 $ (1+a+b)^2 > 4bn+5$.
\end{enumerate}

In cases (I) -- (III), the proposed sequences have the form $\d=(a^s,b^{n-s})$, where respectively:
\[
\text{(I)}\quad  s={\frac{a+b+1}{2}};\qquad\text{(II)}\quad   s={\frac{a+b+3}{2}};\qquad\text{(III)}\quad   s={\frac{a+b}{2}}.
\]
In case (I), $s^2-(1+a+b)s+nb=-\frac{(1+a+b)^2}4+nb < 0$ and so $\d$ is nongraphic by Theorem~\ref{T:ab}. Moreover, $\d$ has sum 
\[
as+b(n-s)=\frac{a(a+b+1)}{2}+\frac{b(2n-(a+b+1))}{2}=\frac{(a-b)(a+b+1)+2bn}{2}.
\]
In case (I), $a+b$ is odd, so $a-b$ is odd and $a+b+1\equiv 2bn \pmod 4$, so 
\[
(a-b)(a+b+1)+2bn\equiv  2bn+2bn\equiv0\pmod 4.
\]
Thus $\d$ has even sum. Cases (II) and (III) are treated in exactly the same manner.

In case (IV), $a,b,n$ are all odd. Here we consider the decreasing sequence
$\d=(d_1,\dots,d_n)=(a^{\frac{a+b}{2}},b+1,b^{\frac{2n-(a+b)-2}{2}})$.
Let $s=\frac{a+b}{2}$. We will show  that the sequence fails the $s$-th inequality of the Erd\H{o}s-Gallai Theorem. By assumption,
$(2s+1)^2>4nb+5$, so 
 $nb<s^2+s-1$. As $nb$ is odd, this implies $nb\le  s^2+s-3$. Thus $b+1\le  s^2+s-nb+b-2$. Therefore,
\begin{align*}
\sum_{i=1}^s d_i=as & >as-2=(2s-b)s-2\\
& =s(s-1)+[s^2+s-nb+b-2]+(n-s-1)b\\
& \geqslant s(s-1)+b+1+(n-s-1)b\\
& = s(s-1)+\sum_{i=s+1}^n \min\{s,d_i\}.
\end{align*}
So (EG) fails for $k=s$. Finally, $\d$ has even sum  since, as $a,b,n \equiv 1 \pmod 2$,
\[
as+(b+1)+(n-s-1)b \equiv s+0+s \equiv 0 \pmod 2.
\]
This completes the proof of Theorem \ref{T:zz2}.

\providecommand{\bysame}{\leavevmode\hbox to3em{\hrulefill}\thinspace}
\providecommand{\MR}{\relax\ifhmode\unskip\space\fi MR }
\providecommand{\MRhref}[2]{%
  \href{http://www.ams.org/mathscinet-getitem?mr=#1}{#2}
}
\providecommand{\href}[2]{#2}

  \end{document}